\documentclass[11pt]{article}
\usepackage[a4paper,margin=1.05in]{geometry}
\usepackage{amsmath,amssymb,amsthm,mathtools}
\usepackage{mathrsfs}
\usepackage{hyperref}
\usepackage[nameinlink]{cleveref}
\usepackage{enumitem}
\usepackage{microtype}
\usepackage{bm}

\def\R{\mathbb{R}}

\newcommand{\ds}{\displaystyle}

\newcommand{\mcl}{\mathcal{L}}

\setlist[itemize]{leftmargin=1.5em}
\setlist[enumerate]{leftmargin=1.5em}


\newtheorem{theorem}{Theorem}[section]

\newtheorem{lemma}[theorem]{Lemma}
\newtheorem{corollary}[theorem]{Corollary}

\theoremstyle{definition}

\newtheorem{example}{Example}[section]
\newtheorem{remarks}{Remarks}[section]

\title{Explicit pulsating fronts and minimal speeds in periodic Fisher-KPP equations}

\author{Lionel Roques\\
\\
\footnotesize{INRAE, BioSP, 84914, Avignon, France}\\
}

\date{}

\begin{document}
\maketitle

\begin{abstract}
We study a Fisher-KPP equation with spatially periodic diffusion and reaction terms. 
We identify a class of periodic media for which the equation admits an explicit, closed-form solution. 
Through a nonlinear change of variables, the problem is reduced to the homogeneous Fisher-KPP equation, allowing us to construct an exact pulsating traveling front that connects the positive periodic stationary state to $0$. 
We also derive an explicit expression for the asymptotic spreading speed and establish new asymptotic and comparison results. Finally, combining our change of variables and eigenvalue transform with existing results on KPP fronts in periodic media, we extend Bramson-type logarithmic delay results to the case of heterogeneous periodic diffusion.
\end{abstract}


\section{Introduction and main assumptions}

In this article, we derive explicit formulas for solutions and spreading speeds of the Fisher-KPP equation with spatially periodic coefficients:
\begin{equation}\label{eq:KPP}
\partial_t u \;=\; \partial_x\!\big(a(x)\,\partial_x u \big) \;+\; f(x,u), 
\qquad  (t,x) \in \R \times \R.
\end{equation}
This class is the natural periodic, spatially heterogeneous counterpart of the homogeneous Fisher-KPP model
$\partial_t u = a\,\partial_{xx}u + f(u)$, introduced independently by Fisher and by Kolmogorov-Petrovskii-Piskunov~\cite{Fis37,KolPet37}.  
Spatial heterogeneity in reaction-diffusion models -- and in particular the periodic framework -- has been extensively used in theoretical ecology, following the influential work of Shigesada and collaborators (see \cite{ShiKaw97,ShiKaw86}), and has also been the subject of a broad mathematical literature establishing numerous qualitative properties;  see the pioneering works~\cite{BerHam02,Wei02,Xin00}.

A central issue in these works concerns the {propagation} properties of solutions of~\eqref{eq:KPP}, including the existence and properties of pulsating traveling fronts and the characterization of asymptotic spreading speeds. While the qualitative theory is well developed, explicit closed-form solutions in heterogeneous periodic media are, to the best of our knowledge, unavailable. In the homogeneous case (constant $a$ and constant linear growth $r=f'(0)$), the minimal wave speed formula $c^*=2\sqrt{r\, a}$ -- which also equals the asymptotic spreading speed -- is classical since the works of Fisher and Kolmogorov-Petrovskii-Piskunov~\cite{Fis37,KolPet37}. However, even in this constant-coefficient case, traveling wave profiles are not explicitly known except in a special case discovered by Ablowitz and Zeppetella~\cite{AblZep79}.

\paragraph{Main assumptions.}
The diffusion coefficient $a(x)$ is assumed to be positive, $L$-periodic for some $L>0$, and of class $\mathcal{C}^{2,\alpha}$ with $\alpha\in(0,1)$. 
The nonlinearity $f:\mathbb{R}\times\mathbb{R}_+\to\mathbb{R}$ is of class $\mathcal{C}^{1,\alpha}$ in $(x,u)$ and $\mathcal{C}^2$ in $u$, is $L$-periodic in $x$, and satisfies $f(x,0)=0$ for all $x\in\mathbb{R}$. We assume the following KPP structure:
\begin{equation}\label{hyp:kpp1}
\forall x\in\mathbb{R},\quad u\mapsto \frac{f(x,u)}{u}\ \text{is nonincreasing on }(0,\infty),
\end{equation}
and
\begin{equation}\label{hyp:kpp2}
\exists M\ge 0\ \text{such that}\ \forall u\ge M,\ \forall x\in\mathbb{R},\quad f(x,u)\le 0.
\end{equation}
For convenience, we define the local growth rate
\[
r(x):=\partial_u f(x,0).
\]
A canonical example of function $f$ satisfying the above assumptions is the logistic form
\begin{equation}\label{eq:logistic}
f(x,u)=r(x)\,u-b(x)\,u^2,\qquad r(x), \ b(x)>0\ \text{are $L$-periodic}.
\end{equation}

\medskip
Define the linear operator
\[
\mathcal{L}_0[a;r]:\ \psi \mapsto (a\psi')' + r(x)\,\psi,
\]
acting on $\mathcal{C}^2$ functions, and denote by $k_0^L[a;r]$ its principal eigenvalue with  $L$-periodic conditions (see \cite{BerHamRoq05a} for existence and properties). 
We further assume
\begin{equation}\label{hyp:lambda1>0}
k_0^L[a;r]>0.
\end{equation}
Under \eqref{hyp:kpp1}-\eqref{hyp:kpp2} and \eqref{hyp:lambda1>0}, there exists a unique positive $L$-periodic stationary state $p(x)$ solving
\[
\partial_x\!\big(a(x)\,\partial_x p \big) \;+\; f(x,p)=0,\qquad x\in\mathbb{R},
\]
see~\cite{BerHamRoq05a}.

\paragraph{Pulsating traveling fronts and asymptotic speed of propagation.}
Following Berestycki-Hamel~\cite{BerHam02}, a pulsating traveling front moving to the right with constant speed $c>0$ is a positive solution of \eqref{eq:KPP} satisfying
\[
u(t+L/c,x)=u(t,x-L), \hbox{ for all } (t,x) \in \R \times \R,
\]
and
\[
\lim_{x \to -\infty} u(t,x)=p(x),\qquad
\lim_{x \to+\infty} u(t,x)=0,
\]
locally in $t$, 
with $p$ the positive $L$-periodic stationary state defined above. 
Under our assumptions, there exists a minimal speed $c^*>0$ such that pulsating traveling fronts exist for all $c\ge c^*$ and do not exist for $0<c<c^*$~\cite{BerHamRoq05b}.
Moreover, the same $c^*$ is the \emph{asymptotic spreading speed} in the sense of Aronson and Weinberger~\cite{AroWei78}, see \cite{BerHamNad05d}:  for any compactly supported nonnegative initial data $u(0,\cdot) \not \equiv 0$, the solution of the Cauchy problem associated with~\eqref{eq:KPP} satisfies
\[
\lim_{t\to+\infty} \sup_{|x|\ge (c^*+\varepsilon)t} u(t,x)=0,
\qquad
\lim_{t\to+\infty} \sup_{|x|\le (c^*-\varepsilon)t} \big|u(t,x)-p(x)\big|=0,
\]
for any fixed $\varepsilon>0$.

The speed $c^*$ can be computed with the Freidlin--G\"artner formula~\cite{GarFre79} (see also~\cite{BerHamNad05d}):
\begin{equation}\label{eq:garfre}
        c^* = \inf_{\lambda > 0} \frac{k_\lambda^L[a;r]}{\lambda}>0,
\end{equation}
where $k_\lambda^L[a;r]$ is the principal eigenvalue of the operator 
\begin{equation*}
   \mcl_\lambda[a ;r ]\, : \, \psi \mapsto  (a \psi')'  - 2 \lambda a  \psi' + \left( \lambda^2 a -  \lambda a' + r \right)\psi,
\end{equation*}
acting on $\mathcal{C}^2$ functions, with $L-$periodic conditions. We recall that the principal eigenvalue $k_\lambda^L$ of $\mcl_\lambda$ is defined as the unique real number for which there exists a $L-$periodic function $\varphi > 0$ satisfying
\[\mathcal{L}_\lambda \varphi = k_\lambda^L\varphi.\]

The existence and uniqueness of the pair $(\varphi,k_\lambda^L)$ (up to multiplication of $\varphi$ by a constant) is guaranteed by the Krein-Rutman theory~\cite{KreRut48}.

\paragraph{Bramson logarithmic delay.}
Assume that $f$ has the logistic form \eqref{eq:logistic}.
The above-mentioned results on the asymptotic spreading speed show that the stationary
state $p(x)$ attracts, locally uniformly, the solutions of~\eqref{eq:KPP} starting from
compactly supported nonnegative initial data $u(0,\cdot)\not\equiv0$.
To study more precisely how the transition from $0$ to $p(x)$ occurs, one can
follow the dynamics of a level set, for instance
\begin{equation}\label{eq:def-Xt}
    X(t):=\max\Big\{x\ge 0 \; : \; u(t,x)=\frac12 \min_{\R} p\Big\}.
\end{equation}
The existence of the asymptotic spreading speed $c^*$ implies that $X(t)/t\to c^*$ as
$t\to +\infty$.
Much more precise descriptions of the position of $X(t)$ have been obtained.
This theory goes back to the pioneering work of Bramson \cite{Bra78,Bra83}
for the homogeneous Fisher--KPP equation, that is when $a\equiv a_0>0$ and
$r\equiv r_0>0$, $b\equiv b_0>0$ are constant. In this case,
\begin{equation}
    \label{eq:X(t)}
    X(t)=c^* t - \frac{3}{2 \lambda^*} \log t + \mathcal O(1)
    \quad\text{as }t\to +\infty,
\end{equation}
with
$c^*=2\sqrt{a_0 r_0},$
$
\lambda^*=\sqrt{r_0/a_0}$.

Since Bramson’s work, there have been many extensions and alternative proofs,
based on PDE techniques rather than the probabilistic arguments of the original
papers; see for instance \cite{HamNol16}.
In particular, \cite{HamNol16} generalized
Bramson’s result to the periodic case considered here, but with a \emph{constant}
diffusion coefficient $a\equiv a_0$.
Their main result implies that the formula~\eqref{eq:X(t)} still holds in this
periodic case, with $c^*$ given by~\eqref{eq:garfre} and $\lambda^*$ the
corresponding minimizer in~\eqref{eq:garfre}.

\paragraph{Contribution.}
Our contributions are fourfold:
\begin{itemize}
  \item[(A)] In Section~\ref{sec:explicit-front}, under the logistic structure~\eqref{eq:logistic}, we identify a class of periodic media $(a,r,b)$ for which the Fisher-KPP equation admits a fully explicit entire solution, obtained via a change of variables that reduces the equation to a standard constant-coefficient KPP form; we then use the classical Ablowitz-Zeppetella profile~\cite{AblZep79} and transform it back to the original variables. This solution is a pulsating traveling front connecting the periodic stationary state $p$ to $0$.
\item[(B)] In Section~\ref{sec:speed}, we derive a general formula that expresses the principal eigenvalue $k_\lambda^L[a;r]$ in terms of the principal eigenvalue of an elliptic operator with constant diffusion. Using this result, we construct a family of periodic Fisher-KPP equations of the form~\eqref{eq:KPP} for which the minimal front speed admits an explicit formula. These results are closely related to those of Nadin~\cite{Nad11}, who analyzed how the spreading speed depends on the coefficients in a space-time periodic Fisher-KPP equation, establishing transformations of the principal eigenvalue that our findings complement.
  \item[(C)] Using the formula of Section~\ref{sec:speed}, we derive explicit comparison bounds (for general $a$ and $r$) and quantitative large-period limits (for general $a$ and constant $r$) for the speed $c^*$. This last result recovers the large-period limit obtained in Corollary~2.4 of~\cite{HamNadRoq11}, but via a simpler argument.
 \item[(D)] In Section~\ref{sec:bramson}, combining our change of variables and eigenvalue transform with the results of~\cite{HamNol16}, we extend their Bramson-type result to the case of a heterogeneous periodic diffusion coefficient $a(x)$. To the best of our knowledge, the Bramson logarithmic delay had previously been
obtained in periodic media only for constant diffusion.

\end{itemize}

\paragraph{Notations.} Define the diffusive coordinate
\begin{equation}\label{eq:Phi}
y= h(x):=\int_0^x a(\xi)^{-1/2}\,d\xi, 
\qquad \Lambda:= h(L)=\int_0^L a(x)^{-1/2}\,dx,
\end{equation}
and
$$\langle a^{-1/2}\rangle:= \frac{1}{L}\int_0^L a(x)^{-1/2}\,dx.$$
Since $a>0$, $h$ is a $\mathcal C^2$ increasing diffeomorphism $\mathbb{R}\to\mathbb{R}$ with $ h(x+L)= h(x)+\Lambda$.

For convenience, we also set:
\begin{equation} \label{def:w}
    w(x):=\tfrac13\, a(x)^{1/4}\, (a(x)^{3/4})''.
\end{equation}
\section{An explicit pulsating traveling front}\label{sec:explicit-front}

In this section, we assume a logistic-like nonlinearity~\eqref{eq:logistic}. 
Fix constants $r_0>0$ and $ b_0>0$, and impose the relations:
\begin{subequations}\label{eq:M-and-B}
\begin{align}
r(x)&= r_0+w(x),\label{eq:M}\\
b(x)&= b_0\,a(x)^{1/4}.
\label{eq:B}
\end{align}
\end{subequations}

\begin{theorem}[Explicit Ablowitz-Zeppetella solution]\label{thm:explicit-front}
Assume~\eqref{eq:logistic} and~\eqref{eq:M-and-B}.  

\noindent (i) The function $\displaystyle p(x):=\frac{r_0}{ b_0}\,a(x)^{-1/4}$ is the unique positive bounded stationary solution of~\eqref{eq:KPP}.

\smallskip
\noindent (ii) Let $\displaystyle c_\mathrm{AZ}:=5\sqrt{ r_0/6}$ and define
\begin{equation}\label{eq:explicit-solution}
u(t,x):=p(x)\,
\Big(1+\exp\big(\sqrt{ r_0/6}\,[ h(x)-c_\mathrm{AZ} t-\xi_0]\big)\Big)^{-2},
\end{equation}
where $ h $ is as in \eqref{eq:Phi} and $\xi_0\in\mathbb{R}$ is arbitrary. 
Then $u$ is a classical entire solution of \eqref{eq:KPP} on $\mathbb{R}\times\mathbb{R}$ satisfying
\[
\lim_{t\to-\infty}u(t,x)=p(x),\qquad
\lim_{t\to+\infty}u(t,x)=0.
\]
\smallskip
\noindent (iii) The solution $u(t,x)$ defined in (ii) is a pulsating traveling front moving to the right with speed $\ds c= \frac{c_{\mathrm{AZ}}}{\langle a^{-1/2}\rangle}$.
\end{theorem}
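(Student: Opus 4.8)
The plan is to exploit the change of variables $y=h(x)$ from \eqref{eq:Phi}, which should transform \eqref{eq:KPP}, under the structural assumptions \eqref{eq:M-and-B}, into the homogeneous Fisher-KPP equation $\partial_t v = \partial_{yy} v + r_0 v - b_0 v^2$ after an additional rescaling of the unknown. Concretely, I would look for a substitution of the form $u(t,x) = a(x)^{-1/4} v(t, h(x))$. Writing $g(x):=a(x)^{1/4}$ so that $p(x) = (r_0/b_0)\,g(x)^{-1}$ and $u = g^{-1} v\circ h$, I would compute $\partial_x(a\,\partial_x u)$ using $h'=a^{-1/2}$, $\partial_x(v\circ h) = a^{-1/2}(\partial_y v)\circ h$, and the Sturm-Liouville identity $\partial_x(a\,\partial_x(g^{-1}\phi)) = g^{-1}\partial_x(a\,\partial_x\phi) + \phi\,\partial_x(a\,\partial_x g^{-1}) + \text{(cross terms that cancel)}$; the miracle is that the term $\partial_x(a\,\partial_x g^{-1}) = -(a(a^{1/4})^{-1}{}')'$ is exactly $-w(x)\,g^{-1}$ up to the sign bookkeeping built into the definition \eqref{def:w} of $w$. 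This is what forces the choice $r(x)=r_0+w(x)$ in \eqref{eq:M}: the heterogeneous zeroth-order term $r(x)$ combines with the curvature term produced by conjugating by $g^{-1}$ to leave the constant $r_0$. Similarly $b(x)=b_0 a(x)^{1/4}=b_0 g(x)$ is precisely what makes the quadratic term $-b(x)u^2 = -b_0 g\cdot g^{-2} v^2 = -b_0 g^{-1} v^2$ collapse to $g^{-1}\cdot(-b_0 v^2)$.

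Granting that reduction, part (i) is immediate: the positive bounded stationary states of the homogeneous equation are $v\equiv 0$ and $v\equiv r_0/b_0$, and uniqueness of the positive one transfers back to give $p(x)=(r_0/b_0)g(x)^{-1}=(r_0/b_0)a(x)^{-1/4}$; alternatively one checks directly that this $p$ solves the stationary equation and invokes the uniqueness result quoted from \cite{BerHamRoq05a} after verifying \eqref{hyp:lambda1>0}. For part (ii), I would recall that the Ablowitz-Zeppetella solution of $\partial_t V = \partial_{yy}V + r_0 V - b_0 V^2$ is $V(t,y) = (r_0/b_0)\bigl(1+\exp(\sqrt{r_0/6}\,(y - c_{\mathrm{AZ}} t - \xi_0))\bigr)^{-2}$ with $c_{\mathrm{AZ}} = 5\sqrt{r_0/6}$ — this is a direct substitution check into the ODE obtained in the travelling-wave variable $z=y-c_{\mathrm{AZ}}t$, namely $V'' + c_{\mathrm{AZ}}V' + r_0 V - b_0 V^2 = 0$, satisfied by $V = (r_0/b_0)(1+e^{\sqrt{r_0/6}\,z})^{-2}$. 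Pulling back via $u = g^{-1} V\circ h$ gives exactly \eqref{eq:explicit-solution}, and since $h$ is a $\mathcal C^2$ diffeomorphism and the $\mathcal C^{2,\alpha}/\mathcal C^{1,\alpha}$ regularity hypotheses make all the compositions classical, $u$ is a classical entire solution. The limits follow from $\lim_{z\to-\infty}V=r_0/b_0$ and $\lim_{z\to+\infty}V=0$ together with the fact that, for fixed $x$, $h(x)-c_{\mathrm{AZ}}t\to\mp\infty$ as $t\to\pm\infty$.

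For part (iii), I must verify the pulsating-front relation $u(t+T,x)=u(t,x-L)$ with $T=L/c$ and identify $c$. From \eqref{eq:explicit-solution}, $u(t,x)$ depends on $t$ and $x$ only through $p(x)$ — which is $L$-periodic — and through the phase $\Theta(t,x):=h(x)-c_{\mathrm{AZ}}t$. Using $h(x-L)=h(x)-\Lambda$ from \eqref{eq:Phi}, one gets $\Theta(t+T,x) = h(x)-c_{\mathrm{AZ}}t - c_{\mathrm{AZ}}T$ and $\Theta(t,x-L) = h(x)-c_{\mathrm{AZ}}t - \Lambda$, so the two agree precisely when $c_{\mathrm{AZ}}T = \Lambda$, i.e. $c = L/T = c_{\mathrm{AZ}}L/\Lambda = c_{\mathrm{AZ}}/\langle a^{-1/2}\rangle$ since $\Lambda = L\langle a^{-1/2}\rangle$. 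Positivity of $u$ and the two limits $\lim_{x\to-\infty}u(t,x)=p(x)$, $\lim_{x\to+\infty}u(t,x)=0$ (locally uniformly in $t$) follow as in (ii) from $h(x)\to\pm\infty$ as $x\to\pm\infty$, which in turn needs $\int^{\pm\infty}a^{-1/2}=\pm\infty$ — a consequence of $a$ being positive, continuous and periodic, hence bounded above. The only real content beyond bookkeeping is the conjugation computation in the first paragraph; the main obstacle is getting the curvature term from $\partial_x(a\,\partial_x(a^{-1/4}\,\cdot))$ to match $-w$ with the correct constant $\tfrac13$, which is a careful but routine application of the product rule, and is exactly the identity that the definitions \eqref{def:w}, \eqref{eq:M}, \eqref{eq:B} were reverse-engineered to produce. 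One should also note for completeness that \eqref{hyp:lambda1>0} holds here because the reduced problem has principal eigenvalue $r_0>0$ (the change of variables being an isospectral-type conjugation up to the explicit weight), so the framework of the Introduction — existence of $c^*$, the Freidlin-Gärtner formula — applies and is consistent with $c$ above being $\ge c^*$.
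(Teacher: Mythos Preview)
Your proposal is correct and follows essentially the same route as the paper: the substitution $u(t,x)=a(x)^{-1/4}\,v(t,h(x))$ reduces \eqref{eq:KPP} under \eqref{eq:M-and-B} to the homogeneous Fisher--KPP equation (this is exactly the content of the paper's Lemma~\ref{lem:change}), after which (i) comes from the constant state $v\equiv r_0/b_0$, (ii) from the Ablowitz--Zeppetella profile, and (iii) from $h(x-L)=h(x)-\Lambda$ and the $L$-periodicity of $a$. One small remark: your ``Sturm--Liouville identity'' with cross terms that cancel is not literally true for arbitrary $\phi$ --- the cancellation of $\partial_y v$ terms happens only because $h'=a^{-1/2}$ interacts with the factor $a^{-1/4}$ --- but you correctly flag this as the routine product-rule computation that needs to be carried out, and the paper does it explicitly; also note that your own computation of the $t\to\pm\infty$ limits in (ii) actually yields $u\to p$ as $t\to+\infty$ and $u\to 0$ as $t\to-\infty$, the reverse of what is printed in the statement.
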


\begin{example}\label{ex:front}
Let $a(x)=\bigl(1+\varepsilon\cos(2\pi x/L)\bigr)^{2}$ with $|\varepsilon|<1$.
See Fig.~\ref{fig:explicit-entire-solution} for a picture of the corresponding explicit solution at several times.
\end{example}

\begin{figure}[h!]
  \centering
  \includegraphics[width=\linewidth]{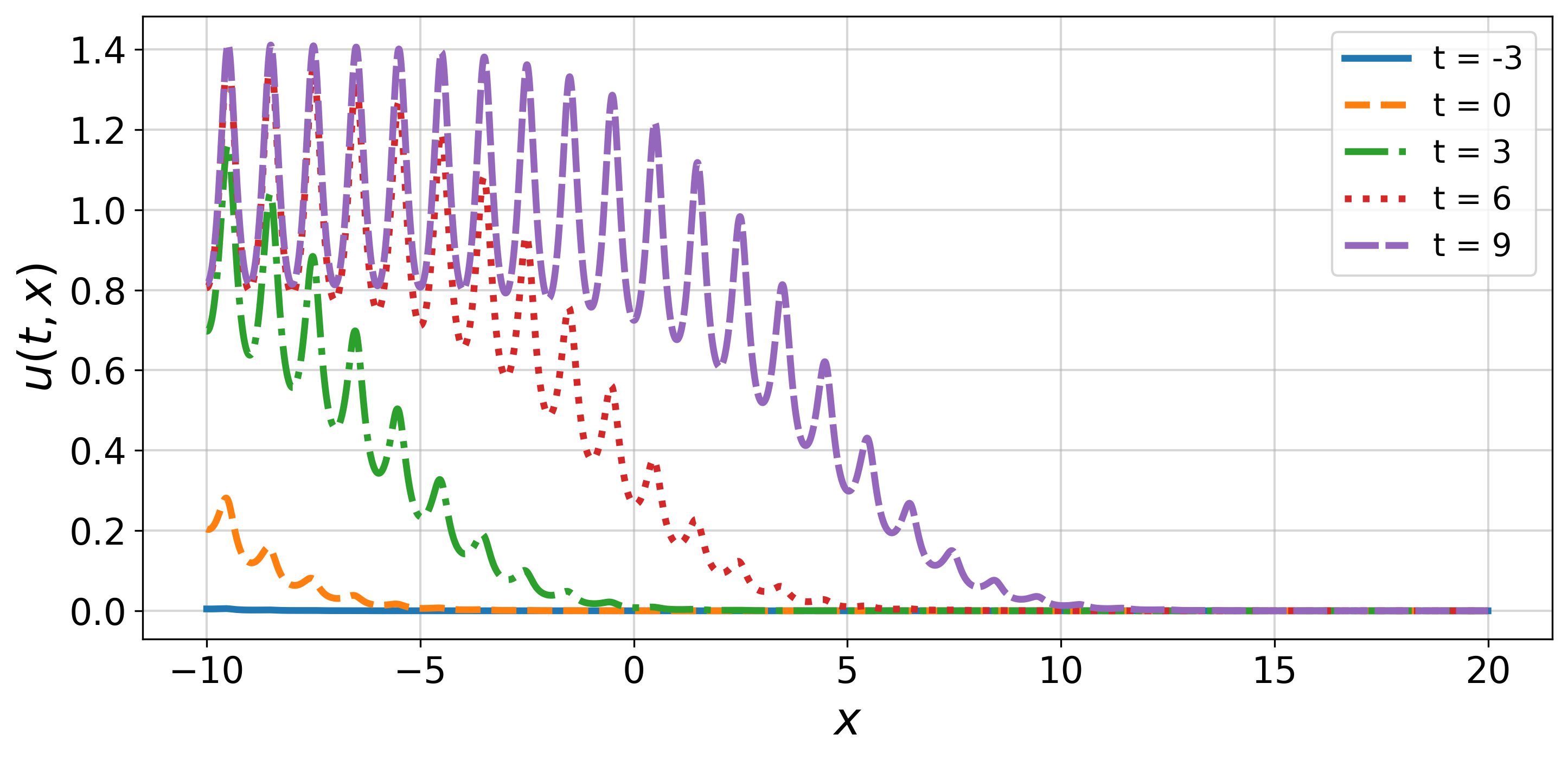}
  \caption{Explicit pulsating traveling front solution $u(t,x)=\dfrac{ r_0}{ b_0}\,a(x)^{-1/4}
  \bigl(1+\exp\bigl(\sqrt{ r_0/6}\,( h(x)-c_{\mathrm{AZ}} t)\bigr)\bigr)^{-2}$ 
  at several times, with $a(x)=(1+\varepsilon\cos(2\pi x/L))^{2}$, $L=1$, $\varepsilon=0.5$, $r_0=1$, $b_0=1$ and
  $c_{\mathrm{AZ}}=5\sqrt{ r_0/6}$. }
  \label{fig:explicit-entire-solution}
\end{figure}

\section{Minimal speed }\label{sec:speed}

We first show that the principal eigenvalue  $k_\lambda^L[a;r]$ can be expressed in terms of the principal eigenvalue of an elliptic operator with constant diffusion.
\begin{theorem}\label{thm:klambda}
We have 
$$k_\lambda^L[a;r]= k_\mu^\Lambda[1 ; R-W],$$with $$\mu=\frac{\lambda}{\langle a^{-1/2}\rangle}, \ R(y):=r(h^{-1}(y)), \ W(y):=w(h^{-1}(y)).$$
\end{theorem}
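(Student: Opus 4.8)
The plan is to strip the drift from $\mathcal{L}_\lambda[a;r]$ by a gauge transformation, and then apply a Liouville change of variables that makes the diffusion constant. The starting point is the conjugation identity
\[
\mathcal{L}_\lambda[a;r]\bigl(e^{\lambda x}\psi\bigr)=e^{\lambda x}\bigl((a\psi')'+r\psi\bigr),\qquad \psi\in\mathcal{C}^2,
\]
which follows by direct differentiation (the $\psi$- and $\psi'$-terms coming from $-2\lambda a\psi'+(\lambda^2a-\lambda a'+r)\psi$ cancel the extra terms produced by $(a(e^{\lambda x}\psi)')'$). Consequently, if $\varphi>0$ is the $L$-periodic principal eigenfunction of $\mathcal{L}_\lambda[a;r]$, then $v:=e^{-\lambda x}\varphi$ is a positive $\mathcal{C}^2$ function with $(av')'+rv=k_\lambda^L[a;r]\,v$ on $\mathbb{R}$ and the twisted periodicity $v(x+L)=e^{-\lambda L}v(x)$; conversely $k_\lambda^L[a;r]$ is the unique real number admitting such a positive $v$ (the twisted-periodic form of the Krein--Rutman characterization recalled in Section~1). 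This reduces matters to the self-adjoint divergence-form operator $v\mapsto(av')'+rv$.

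Next I would carry out the Liouville transformation $y=h(x)$, $\phi(y):=a\bigl(h^{-1}(y)\bigr)^{1/4}\,v\bigl(h^{-1}(y)\bigr)$. Since $h'=a^{-1/2}$ one has $\partial_x=a^{-1/2}\partial_y$, so that in the $y$ variable $(av')'+rv=v_{yy}+\tfrac{a'}{2\sqrt a}v_y+rv$, with $a,a',r$ evaluated at $h^{-1}(y)$. The weight $a^{-1/4}$ is precisely the Liouville factor that annihilates the first-order term: the coefficient of $\phi_y$ in $v=a^{-1/4}\phi$ is $2(a^{-1/4})_y+\tfrac{a'}{2\sqrt a}a^{-1/4}=0$. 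I would then compute the residual zeroth-order coefficient and find
\[
(av')'+rv=a^{-1/4}\Bigl(\phi_{yy}+(R-W)\phi\Bigr),\qquad W=\frac{a''}{4}-\frac{(a')^2}{16\,a}\ \Big|_{x=h^{-1}(y)},
\]
and finally check the elementary identity $\tfrac{a''}{4}-\tfrac{(a')^2}{16a}=\tfrac13\,a^{1/4}(a^{3/4})''$ (obtained by expanding $(a^{3/4})''=\tfrac34 a^{-1/4}a''-\tfrac{3}{16}a^{-5/4}(a')^2$), which identifies $W$ with $w\circ h^{-1}$ as in \eqref{def:w}.

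To conclude, I would feed the eigen-relation $(av')'+rv=k_\lambda^L[a;r]\,v$ through the previous identity and divide by $a^{-1/4}$ to get $\phi''+(R-W)\phi=k_\lambda^L[a;r]\,\phi$ on $\mathbb{R}$; using $h(x+L)=h(x)+\Lambda$ and the $L$-periodicity of $a$, the twisted periodicity of $v$ becomes $\phi(y+\Lambda)=e^{-\lambda L}\phi(y)$. Since $\Lambda=L\,\langle a^{-1/2}\rangle$, writing $e^{-\lambda L}=e^{-\mu\Lambda}$ forces $\mu=\lambda/\langle a^{-1/2}\rangle$. Running the gauge transformation of the first step in reverse, now with constant diffusion and period $\Lambda$, the function $\psi(y):=e^{\mu y}\phi(y)$ is $\Lambda$-periodic, positive, and satisfies $\mathcal{L}_\mu[1;R-W]\psi=k_\lambda^L[a;r]\,\psi$; by the Krein--Rutman characterization of the principal eigenvalue this gives $k_\lambda^L[a;r]=k_\mu^\Lambda[1;R-W]$.

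The main obstacle is the middle step: guessing the correct Liouville weight $a^{-1/4}$ and then doing the chain-rule bookkeeping (all $y$-derivatives must be converted to $x$-derivatives via $\partial_y=a^{1/2}\partial_x$) to verify that the zeroth-order term generated by the change of variables is exactly $w\circ h^{-1}$; the definition \eqref{def:w} is tuned so that this works. The remaining points -- the two gauge conjugations and the computation of $\mu$ -- are routine, the only subtlety being that one must make sure the twisted-periodic principal eigenvalues appearing on both sides coincide with $k_\lambda^L[a;r]$ and $k_\mu^\Lambda[1;R-W]$ as defined through genuine periodic eigenfunctions, which is where the uniqueness part of Krein--Rutman is used.
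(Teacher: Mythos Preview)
Your proposal is correct and follows essentially the same route as the paper: the conjugation identity $\mathcal{L}_\lambda=e^{\lambda x}\mathcal{L}_0\,e^{-\lambda x}$, the Liouville change $y=h(x)$ with weight $a^{-1/4}$ producing the potential $R-W$, and the identification $\mu=\lambda/\langle a^{-1/2}\rangle$ via the periodicity relation. The only cosmetic differences are that the paper packages the Liouville computation into the earlier Lemma~\ref{lem:change} (done in the $x$-variable rather than the $y$-variable) and defines the $\Lambda$-periodic eigenfunction $\varphi$ directly instead of passing through the intermediate twisted-periodic function; the substance is identical.
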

Note that $k_\mu^\Lambda[1 ; R-W]$ is the principal eigenvalue of the operator $ \mathcal{L}_\mu [1; R-W]$ acting on \emph{$\Lambda$-periodic functions}.

\

As a consequence of Theorem~\ref{thm:klambda}, and using the Freidlin--G\"artner formula \eqref{eq:garfre} we derive a closed form expression for the minimal speed, under a condition on the linear part of $f$:
\begin{equation}\label{eq:M-only}
r(x)= r_0+w(x),
\end{equation}
where $r_0>0$ is constant and $w$ is defined by~\eqref{def:w}.
\begin{corollary}[Explicit minimal KPP speed]\label{thm:cstar}
Assume~\eqref{eq:M-only}. 
Then the KPP minimal speed $c^*>0$ of pulsating traveling fronts for \eqref{eq:KPP} is
\begin{equation}\label{eq:cstar}
c^* \;=\; \frac{2\sqrt{ r_0}}{\displaystyle \langle a^{-1/2}\rangle}.
\end{equation}
\end{corollary}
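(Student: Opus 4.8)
The plan is to combine Theorem~\ref{thm:klambda} with the Freidlin--Gärtner formula~\eqref{eq:garfre}, so that the heterogeneous eigenvalue problem collapses onto a constant-coefficient one whose principal eigenvalue is elementary. First I would observe that under hypothesis~\eqref{eq:M-only} we have $r(x)=r_0+w(x)$, hence $R(y)-W(y)=r(h^{-1}(y))-w(h^{-1}(y))=r_0$, a constant. Plugging this into Theorem~\ref{thm:klambda} gives
\[
k_\lambda^L[a;r]=k_\mu^\Lambda[1;r_0],\qquad \mu=\frac{\lambda}{\langle a^{-1/2}\rangle}.
\]
So the whole problem reduces to computing the principal eigenvalue of the constant-coefficient operator $\mcl_\mu[1;r_0]:\psi\mapsto \psi''-2\mu\psi'+(\mu^2+r_0)\psi$ with $\Lambda$-periodic conditions.

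Next I would compute $k_\mu^\Lambda[1;r_0]$ explicitly. Since the coefficients are constant, the constant function $\varphi\equiv 1$ satisfies $\mcl_\mu[1;r_0]\varphi=(\mu^2+r_0)\varphi$, and it is positive and $\Lambda$-periodic, so by the uniqueness part of the Krein--Rutman characterization recalled in the introduction we get $k_\mu^\Lambda[1;r_0]=\mu^2+r_0$. Therefore
\[
k_\lambda^L[a;r]=\left(\frac{\lambda}{\langle a^{-1/2}\rangle}\right)^{\!2}+r_0.
\]
Feeding this into~\eqref{eq:garfre},
\[
c^*=\inf_{\lambda>0}\frac{k_\lambda^L[a;r]}{\lambda}
=\inf_{\lambda>0}\left(\frac{\lambda}{\langle a^{-1/2}\rangle^2}+\frac{r_0}{\lambda}\right),
\]
and minimizing the scalar function $\lambda\mapsto \lambda\langle a^{-1/2}\rangle^{-2}+r_0\lambda^{-1}$ over $\lambda>0$ — the minimizer is $\lambda^*=\sqrt{r_0}\,\langle a^{-1/2}\rangle$ — yields the claimed value $c^*=2\sqrt{r_0}/\langle a^{-1/2}\rangle$.

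One point that needs a word of care is that Theorem~\ref{thm:klambda} and the Freidlin--Gärtner formula are stated for the coefficient pair $[a;r]$ of~\eqref{eq:KPP}, so I should check that the standing assumptions under which $c^*$ is defined (namely~\eqref{hyp:kpp1}--\eqref{hyp:kpp2} and the positivity~\eqref{hyp:lambda1>0}) are compatible with~\eqref{eq:M-only}; in particular $k_0^L[a;r]>0$ follows since $k_0^L[a;r]=k_0^\Lambda[1;r_0]=r_0>0$ by the same computation with $\mu=0$, which also re-confirms via~\cite{BerHamRoq05a} that a unique positive periodic stationary state exists so that the notion of pulsating front, and hence $c^*$, is well defined. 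The positivity $c^*>0$ is then automatic from~\eqref{eq:garfre}. I do not expect any genuine obstacle here: the only mild subtlety is making sure the reduction $R-W\equiv r_0$ is applied to the correct operator (the one with constant diffusion $1$ and $\Lambda$-periodic conditions, as emphasized in the remark after Theorem~\ref{thm:klambda}), after which everything is a one-line eigenvalue computation plus an elementary optimization.
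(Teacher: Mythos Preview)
Your proof is correct and follows essentially the same route as the paper: reduce $R-W$ to the constant $r_0$ via~\eqref{eq:M-only}, apply Theorem~\ref{thm:klambda} to obtain $k_\lambda^L[a;r]=k_\mu^\Lambda[1;r_0]=\mu^2+r_0$ using the constant eigenfunction, and then optimize the Freidlin--G\"artner quotient. The only addition you make is the explicit check that $k_0^L[a;r]=r_0>0$, which the paper leaves implicit in its proof of the corollary; this is a welcome clarification but does not change the argument.
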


\begin{example}\label{ex:speed}
With $a(x)=\bigl(1+\varepsilon\cos(2\pi x/L)\bigr)^2$ (with $|\varepsilon|<1$) and \eqref{eq:M-only}, one has
\[
\langle a^{-1/2}\rangle=\frac{1}{L}\!\int_0^L\frac{dx}{1+\varepsilon\cos(2\pi x/L)}
=\frac{1}{\sqrt{1-\varepsilon^2}},
\]
so that
\[
c^* \;=\; 2\sqrt{r_0 \, (1-\varepsilon^2)}.
\]
\end{example}

\

Corollary~\ref{thm:cstar} implies that the explicit solution of Theorem~\ref{thm:explicit-front} is not the pulsating traveling front with minimal speed, as stated in the next corollary. 
\begin{corollary}\label{cor:AZ-vs-cstar}
Under the assumptions of Theorem~\ref{thm:explicit-front} and Corollary~\ref{thm:cstar}, the pulsating traveling front of Theorem~\ref{thm:explicit-front} propagates strictly faster than the minimal KPP speed:
\[
c \;=\; \frac{c_{\mathrm{AZ}}}{\langle a^{-1/2}\rangle}
\;=\; \frac{5}{\sqrt{6}}\,\frac{\sqrt{ r_0}}{\langle a^{-1/2}\rangle}
\;>\;
\frac{2\sqrt{ r_0}}{\langle a^{-1/2}\rangle}
\;=\; c^*.
\]
\end{corollary}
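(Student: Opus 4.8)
The plan is to simply compare the two speeds computed in the preceding results, so this "proof" is essentially a one-line numerical inequality. First I would recall that Corollary~\ref{thm:cstar} gives $c^*=2\sqrt{r_0}/\langle a^{-1/2}\rangle$ under assumption~\eqref{eq:M-only}, and that assumption~\eqref{eq:M-only} is exactly relation~\eqref{eq:M} imposed in Theorem~\ref{thm:explicit-front}; hence both corollaries apply simultaneously and $c^*$ has the stated closed form. Next I would invoke part~(iii) of Theorem~\ref{thm:explicit-front}, which identifies the speed of the explicit front as $c=c_{\mathrm{AZ}}/\langle a^{-1/2}\rangle$ with $c_{\mathrm{AZ}}=5\sqrt{r_0/6}$, so that $c=\tfrac{5}{\sqrt6}\sqrt{r_0}/\langle a^{-1/2}\rangle$.

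It then remains to observe that $\tfrac{5}{\sqrt6}>2$, i.e. $5>2\sqrt6$, i.e. $25>24$, which is clear. Since $\langle a^{-1/2}\rangle>0$ and $r_0>0$, dividing the common positive factor $\sqrt{r_0}/\langle a^{-1/2}\rangle$ through preserves the strict inequality, giving $c>c^*$. One should also note that the minimal-speed front exists (Corollary~\ref{thm:cstar} asserts $c^*>0$ and the general theory quoted in the introduction gives existence of fronts for every $c\ge c^*$), so the comparison is meaningful: the Ablowitz-Zeppetella front is a genuine pulsating front that simply travels above the minimal speed.

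There is essentially no obstacle here; the only thing to be careful about is the bookkeeping that the hypotheses of Theorem~\ref{thm:explicit-front} (in particular~\eqref{eq:M}) indeed entail the hypothesis~\eqref{eq:M-only} of Corollary~\ref{thm:cstar}, so that the formula~\eqref{eq:cstar} for $c^*$ is available under the joint assumptions. Once that is acknowledged, the chain of equalities and the single inequality $25>24$ complete the argument.
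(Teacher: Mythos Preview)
Your proposal is correct and matches the paper's approach: the paper does not even supply a separate proof for this corollary, since the chain of equalities and the single strict inequality $5/\sqrt{6}>2$ (equivalently $25>24$) are displayed in the statement itself. Your observation that \eqref{eq:M} is precisely \eqref{eq:M-only}, so that both Theorem~\ref{thm:explicit-front} and Corollary~\ref{thm:cstar} apply simultaneously, is exactly the bookkeeping needed.
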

Another consequence of Corollary~\ref{thm:cstar} concerns the propagation of solutions of~\eqref{eq:KPP} in slowly varying media (large $L$), when $r\equiv r_0$ is constant.
\begin{corollary}\label{cor:large-L}
Assume that $a$ and $x\mapsto f(x,\cdot)$ are $1$-periodic, and define the $L$-periodic rescalings $a_L(x):=a(x/L)$ and $f_L(x,\cdot):=f(x/L,\cdot)$. Suppose $r_L(x)=r(x/L)\equiv r_0>0$. Consider the $L$-periodic KPP problem
\begin{equation}\label{eq:KPP_L}
\partial_t u \;=\; \partial_x\!\big(a_L(x)\,\partial_x u \big) \;+\; f_L(x,u), 
\qquad  (t,x) \in \R \times \R,
\end{equation}
and denote by $c^*(L)$ its minimal rightward pulsating front speed. 
Then 
$$c^*(L)=\frac{2\sqrt{r_0}}{\langle a^{-1/2}\rangle} + \mathcal O \left( \frac{1}{L^2}\right) \hbox{ as }L\to +\infty.$$
\end{corollary}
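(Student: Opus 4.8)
The plan is to combine Theorem~\ref{thm:klambda} with the Freidlin--G\"artner formula~\eqref{eq:garfre}, which reduces $c^*(L)$ to an infimum of constant-diffusion periodic principal eigenvalues, and then to control those eigenvalues by a crude uniform bound. First I would record the rescaled data of the $L$-periodic medium $(a_L,r_L)$. Since $a$ is $1$-periodic, $\langle a_L^{-1/2}\rangle=\frac1L\int_0^L a(x/L)^{-1/2}\,dx=\int_0^1 a(s)^{-1/2}\,ds=\langle a^{-1/2}\rangle$, so the average appearing in the statement is independent of $L$, and likewise $\Lambda_L:=h_L(L)=L\,\langle a^{-1/2}\rangle$ is the period of the reduced eigenvalue problem. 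Because $r_L\equiv r_0$, Theorem~\ref{thm:klambda} gives $k_\lambda^L[a_L;r_0]=k_\mu^{\Lambda_L}[1;\,r_0-W_L]$ with $\mu=\lambda/\langle a^{-1/2}\rangle$ and $W_L:=w_L\circ h_L^{-1}$, where $w_L(x)=\tfrac13\,a_L(x)^{1/4}\,(a_L(x)^{3/4})''$.

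Second, I would size the perturbation $W_L$. Applying the chain rule to $a_L(x)=a(x/L)$ gives $(a_L^{3/4})''(x)=L^{-2}(a^{3/4})''(x/L)$, hence $w_L(x)=L^{-2}\,w(x/L)$, and therefore
\[
\delta_L\;:=\;\|W_L\|_{L^\infty(\R)}\;=\;\|w_L\|_{L^\infty(\R)}\;=\;\frac{\|w\|_{L^\infty(\R)}}{L^{2}},
\]
which is $\mathcal O(1/L^2)$ since $w$ is continuous and $1$-periodic (indeed $\mathcal C^{0,\alpha}$ under the stated regularity of $a$). Now $k_\mu^{\Lambda_L}[1;q]$ is the principal eigenvalue of $\psi\mapsto\psi''-2\mu\psi'+(\mu^2+q)\psi$ with $\Lambda_L$-periodic conditions; for a constant zeroth-order term $q\equiv\rho$ the constant function is a positive eigenfunction, so $k_\mu^{\Lambda_L}[1;\rho]=\mu^2+\rho$. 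Combining this with the monotonicity of the periodic principal eigenvalue with respect to its zeroth-order coefficient (standard, e.g.\ via the Krein--Rutman/maximum-principle characterization), the pointwise bounds $r_0-\delta_L\le r_0-W_L(y)\le r_0+\delta_L$ yield, uniformly in $\mu>0$ and $L$,
\[
\mu^2+r_0-\delta_L\;\le\;k_\mu^{\Lambda_L}[1;\,r_0-W_L]\;\le\;\mu^2+r_0+\delta_L.
\]

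Third, I would pass to the speed. Writing $\lambda=\mu\,\langle a^{-1/2}\rangle$ (a bijection of $(0,\infty)$ onto itself), formula~\eqref{eq:garfre} together with Theorem~\ref{thm:klambda} gives
\[
c^*(L)\;=\;\inf_{\lambda>0}\frac{k_\lambda^L[a_L;r_0]}{\lambda}\;=\;\frac{1}{\langle a^{-1/2}\rangle}\,\inf_{\mu>0}\frac{k_\mu^{\Lambda_L}[1;\,r_0-W_L]}{\mu}.
\]
Dividing the two-sided bound above by $\mu$, taking infima (which preserves the inequalities), and using $\inf_{\mu>0}\bigl(\mu+\rho/\mu\bigr)=2\sqrt{\rho}$ for $\rho>0$, one obtains, as soon as $L$ is large enough that $\delta_L<r_0$,
\[
\frac{2\sqrt{r_0-\delta_L}}{\langle a^{-1/2}\rangle}\;\le\;c^*(L)\;\le\;\frac{2\sqrt{r_0+\delta_L}}{\langle a^{-1/2}\rangle}.
\]
Since $\bigl|\sqrt{r_0\pm\delta_L}-\sqrt{r_0}\bigr|\le\delta_L/\sqrt{r_0}$ and $\delta_L=\mathcal O(1/L^2)$, this gives $c^*(L)=\dfrac{2\sqrt{r_0}}{\langle a^{-1/2}\rangle}+\mathcal O(1/L^2)$, which is the claim.

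I do not expect a genuine obstacle here: the whole point is that the brutal $L^\infty$-sandwich is already accurate to order $\delta_L=\mathcal O(1/L^2)$, so no delicate eigenvalue asymptotics are needed --- this is precisely what makes the argument shorter than the one behind Corollary~2.4 of~\cite{HamNadRoq11}. The only two points requiring mild care are the scaling identity $w_L=L^{-2}\,w(\cdot/L)$ (pure chain rule, using that $w$ is built from $a^{3/4}$ and a second derivative) and the monotonicity of the periodic principal eigenvalue in its zeroth-order coefficient. If one wished to identify the exact coefficient of $1/L^2$ rather than the $\mathcal O$-bound, one would instead run first-order perturbation theory about the constant eigenfunction, which produces a correction governed by $\frac{1}{\Lambda_L}\int_0^{\Lambda_L}W_L\,dy=\frac{1}{L^2\langle a^{-1/2}\rangle}\cdot\frac{1}{16}\int_0^1 a^{-3/2}(a')^2\,ds$; but this refinement is not needed.
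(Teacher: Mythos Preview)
Your proof is correct and rests on the same core observation as the paper: the scaling $w_L(x)=L^{-2}w(x/L)$ gives $\|w_L\|_{L^\infty}=\mathcal O(1/L^2)$, and a crude $L^\infty$ sandwich then pins $c^*(L)$ to within $\mathcal O(1/L^2)$ of $2\sqrt{r_0}/\langle a^{-1/2}\rangle$. The implementation differs slightly. The paper stays at the PDE level: it builds comparison growth rates $r^{\mathrm{comp}}_{L,\pm}(x)=r_0\pm\|w_L\|_\infty+w_L(x)$ that satisfy the structural hypothesis~\eqref{eq:M-only}, invokes the parabolic comparison principle to order the corresponding minimal speeds, and then reads off each comparison speed from Corollary~\ref{thm:cstar}. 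You instead work purely spectrally, applying Theorem~\ref{thm:klambda} to land in a constant-diffusion problem and then using monotonicity of the periodic principal eigenvalue in its zeroth-order coefficient before passing through~\eqref{eq:garfre}. Your route is a touch more self-contained (it does not need the monotonicity of $c^*$ in $r$ via PDE comparison, only the elementary eigenvalue monotonicity), while the paper's route reuses Corollary~\ref{thm:cstar} as a black box; both yield the identical two-sided bound $2\sqrt{r_0\mp\delta_L}/\langle a^{-1/2}\rangle$.
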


\begin{remarks}
(1) The hypothesis \eqref{hyp:lambda1>0} holds here for all $L$ since $k_0^L[a;r_0]=r_0>0$ (constants are eigenfunctions). 
(2) The large-period limit in Corollary~2.4 of~\cite{HamNadRoq11} was obtained  through a viscosity-solution approach to the periodic eigenvalue problem appearing in the Freidlin--G\"artner formula. Their method analyzes the convergence, as $L\to\infty$, of the associated eigenfunctions to periodic viscosity solutions of a Hamilton-Jacobi equation
from which the limiting speed is deduced. In contrast, the present argument avoids the viscosity-solution framework entirely. It relies only on the parabolic comparison principle applied to problems with slightly perturbed growth rates, together with the explicit speed formula of Corollary~\ref{thm:cstar}. Moreover, the result here provides a slightly more precise quantitative estimate, since it yields an explicit convergence rate rather than only the asymptotic limit of $c^*(L)$.
\end{remarks}

Another consequence is the derivation of new bounds for the minimal speed of pulsating traveling fronts solving~\eqref{eq:KPP} (or equivalently for the asymptotic spreading speed). Note that we only make the KPP assumptions \eqref{hyp:kpp1}-\eqref{hyp:kpp2} and \eqref{hyp:lambda1>0} here.

\begin{corollary}\label{cor:sandwich-interval}
Define
\[
\underline{ r_0}:=\min_{x\in[0,L]}\Big(r(x)-w(x)\Big),
\qquad
\overline{ r_0}:=\max_{x\in[0,L]}\Big(r(x)-w(x)\Big).
\]
The minimal rightward KPP speed satisfies
\[
\frac{2\sqrt{\underline{ r_0}}}{\ \langle a^{-1/2}\rangle\ }
\;\le\; c^* \;\le\;
\frac{2\sqrt{\overline{ r_0}}}{\ \langle a^{-1/2}\rangle\ }.
\]
\end{corollary}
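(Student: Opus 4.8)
The plan is to bound the operator $\mcl_\lambda[a;r]$ from above and below by operators whose principal eigenvalues are explicitly computable via Corollary~\ref{thm:cstar}. Observe that by Theorem~\ref{thm:klambda}, $k_\lambda^L[a;r]=k_\mu^\Lambda[1;R-W]$ with $\mu=\lambda/\langle a^{-1/2}\rangle$, so everything is governed by the transformed growth rate $R-W$, i.e. by $r-w$ read in the diffusive coordinate. By the definitions of $\underline{r_0}$ and $\overline{r_0}$ we have, pointwise,
\[
\underline{r_0}\ \le\ R(y)-W(y)\ \le\ \overline{r_0}\qquad\text{for all }y\in\R .
\]

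Next I would invoke the standard monotonicity of the periodic principal eigenvalue with respect to the zeroth-order coefficient: if two operators of the form $\mcl_\mu[1;\cdot]$ differ only in their zeroth-order terms, ordered pointwise, then their principal eigenvalues are ordered the same way (this follows from the Krein--Rutman characterization already quoted, or from the min-max formula for $k_\mu^\Lambda$). Applying this with the constant growth rates $\underline{r_0}$ and $\overline{r_0}$ gives
\[
k_\mu^\Lambda[1;\underline{r_0}]\ \le\ k_\mu^\Lambda[1;R-W]\ \le\ k_\mu^\Lambda[1;\overline{r_0}]
\]
for every $\mu>0$, hence the same chain of inequalities for $k_\lambda^L[a;r]$ after the substitution $\mu=\lambda/\langle a^{-1/2}\rangle$. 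Dividing by $\lambda>0$ and taking the infimum over $\lambda>0$ in the Freidlin--G\"artner formula~\eqref{eq:garfre} — the infimum preserves the inequalities — yields
\[
\inf_{\lambda>0}\frac{k_\lambda^L[a;\underline{r_0}+w]}{\lambda}\ \le\ c^*\ \le\ \inf_{\lambda>0}\frac{k_\lambda^L[a;\overline{r_0}+w]}{\lambda},
\]
where on the two sides I have rewritten the constant-growth eigenvalue back in the original coordinates using Theorem~\ref{thm:klambda} in reverse, noting that $\underline{r_0}=\underline{r_0}+w-w$ means the transformed growth rate $\underline{r_0}$ corresponds precisely to $r(x)=\underline{r_0}+w(x)$ in the original variable (and similarly for $\overline{r_0}$).

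Finally, each of the two outer quantities is exactly the setting of Corollary~\ref{thm:cstar}: the choice $r(x)=\underline{r_0}+w(x)$ satisfies the hypothesis~\eqref{eq:M-only} with constant $r_0=\underline{r_0}>0$ (positivity holds since $k_0^L[a;\underline r_0+w]\ge k_0^L[a;r]>0$ by the same monotonicity, using $\underline r_0+w\ge$ ... actually simply because $\underline r_0>0$ is needed — see below), so the lower bound equals $2\sqrt{\underline{r_0}}/\langle a^{-1/2}\rangle$; likewise the upper bound equals $2\sqrt{\overline{r_0}}/\langle a^{-1/2}\rangle$. This gives the claimed sandwich. The one point that needs a short argument rather than a citation is the positivity $\underline{r_0}>0$: this is exactly the content of hypothesis~\eqref{hyp:lambda1>0}, since by Theorem~\ref{thm:klambda} with $\lambda=0$ we have $0<k_0^L[a;r]=k_0^\Lambda[1;R-W]$, and the principal eigenvalue of $(\,\cdot\,)''+(R-W)$ on $\Lambda$-periodic functions is bounded above by $\max(R-W)=\overline{r_0}$ and below by $\min(R-W)=\underline{r_0}$ (test against constants and use the min-max formula), whence $\underline{r_0}>0$ need not follow directly — what does follow is $\overline{r_0}\ge k_0^L[a;r]>0$, so the upper bound is always meaningful, while for the lower bound one uses that $k_\mu^\Lambda[1;\underline r_0]=\underline r_0-\mu^2\Lambda^2\cdot 0$... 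I expect the main obstacle to be precisely this bookkeeping: verifying that the constant-coefficient comparison operators genuinely fall under Corollary~\ref{thm:cstar} (in particular that their relevant principal eigenvalue at $\lambda=0$ is positive so that $c^*$ for them is well-defined and given by the formula), and handling the degenerate case $\underline{r_0}\le 0$, where the lower bound $2\sqrt{\underline{r_0}}/\langle a^{-1/2}\rangle$ should be interpreted as $0$ and the inequality is trivial.
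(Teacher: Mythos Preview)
Your core argument is correct and follows the same skeleton as the paper: sandwich $r$ between $\underline{r_0}+w$ and $\overline{r_0}+w$, then invoke Corollary~\ref{thm:cstar} on each side. The one methodological difference is that the paper passes through the parabolic comparison principle at the level of solutions (ordered growth rates $\Rightarrow$ ordered spreading speeds), whereas you compare directly at the level of principal eigenvalues via Theorem~\ref{thm:klambda} and the monotonicity of $k_\mu^\Lambda[1;\cdot]$ in the zeroth-order term, and only then take the Freidlin--G\"artner infimum. Both routes are standard; yours is arguably cleaner here since it never leaves the linear eigenvalue framework, while the paper's route has the advantage of not needing Theorem~\ref{thm:klambda} explicitly (Corollary~\ref{thm:cstar} already encapsulates it).

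Your final paragraph contains a computational slip: for the constant-coefficient operator one has $k_\mu^\Lambda[1;\underline{r_0}]=\mu^2+\underline{r_0}$ (constant eigenfunction), not the expression you started writing. With this in hand the bookkeeping is immediate: $\inf_{\lambda>0}\big(\lambda/\langle a^{-1/2}\rangle^2+\underline{r_0}/\lambda\big)$ equals $2\sqrt{\underline{r_0}}/\langle a^{-1/2}\rangle$ when $\underline{r_0}>0$ and is $\le 0$ otherwise, so the lower bound is either the stated one or trivial, exactly as you suspected. Your observation that $\underline{r_0}>0$ does \emph{not} follow from~\eqref{hyp:lambda1>0} is correct (you only get $\overline{r_0}\ge k_0^L[a;r]>0$); the paper does not address this point either and implicitly reads the lower bound as vacuous when $\underline{r_0}\le 0$.
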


\begin{example}[Constant growth, weakly oscillatory diffusion]\label{ex:const-mu-cos-a}
Let $r(x)\equiv r_0>0$ and choose a $1$-periodic diffusion
\[
a(x)=\bigl(1+\varepsilon\cos(2\pi x)\bigr)^2,\qquad |\varepsilon|<1.
\]
Then $\ds \langle a^{-1/2}\rangle=\frac{1}{\sqrt{1-\varepsilon^2}}.$ Moreover,
\begin{align*}
  w(x) & =\tfrac13\, a(x)^{1/4}\, (a(x)^{3/4})''   \\
  & = -2\pi^2\,\varepsilon\cos(2\pi x)+\mathcal O(\varepsilon^2)\quad\text{as }\varepsilon\to0,
\end{align*}
hence $\|w\|_{L^\infty}\le C\,|\varepsilon|$ for $|\varepsilon|\le\varepsilon_0$ (with $C>0$ independent of $\varepsilon$).
By Corollary~\ref{cor:sandwich-interval}, 
\[
\frac{2\sqrt{r_0-\|w\|_{L^\infty}}}{\langle a^{-1/2}\rangle}
\;\le\; c^*(a,r_0)\;\le\;
\frac{2\sqrt{r_0+\|w\|_{L^\infty}}}{\langle a^{-1/2}\rangle},
\]
and therefore,
\[
c^*(a,r_0)
=2\sqrt{r_0(1-\varepsilon^2)}\;+\;\mathcal O(|\varepsilon|)\quad\text{as }\varepsilon\to0.
\]
\end{example}

\section{Bramson logarithmic delay with periodic diffusion}
\label{sec:bramson}

We show that the main result of~\cite{HamNol16} remains true when the diffusion
coefficient $a(x)$ is not constant.

\begin{theorem}[Theorem 1.2 in~\cite{HamNol16} with periodic diffusion]
\label{cor:bramson-periodic-diffusion}
Assume that $f$ has the logistic form \eqref{eq:logistic}.
Let $u$ be the solution of \eqref{eq:KPP} with compactly supported
initial datum $u_0\ge0$, $u_0\not\equiv0$.
Then there exist a constant $C\ge0$ and a function $\xi:(0,\infty)\to\R$ with
$|\xi(t)|\le C$ for all $t>0$ such that
\begin{equation}\label{eq:bramson-diffusion}
\lim_{t\to\infty}
\left\|
u(t,\cdot)-
U_{c^*}\!\left(
t-\frac{3}{2c^*\lambda^*}\log t+\xi(t),\,\cdot
\right)
\right\|_{L^\infty(0,\infty)}=0,
\end{equation}
where $U_{c^*}$ is the minimal rightward pulsating traveling front for \eqref{eq:KPP},
$c^*$ is given by~\eqref{eq:garfre} and $\lambda^*$ is the corresponding minimizer
in~\eqref{eq:garfre}.
\end{theorem}
If moreover \eqref{eq:M-only} holds, then Corollary~\ref{thm:cstar} yields
$c^*=2\sqrt{r_0}/\langle a^{-1/2}\rangle$, and the explicit computation in the
proof of that corollary shows that the minimizer in \eqref{eq:garfre} is
$\lambda^*=\langle a^{-1/2}\rangle\sqrt{r_0}$.

\section{Proofs}\label{sec:proofs}

We begin with a change of variables that makes the diffusion constant and a rescaling that removes the first derivative produced by the change of spatial variable.

\begin{lemma}\label{lem:change}
Let $a\in \mathcal C^2(\R)$ with $a>0$, let $u:\R\times\R\to\R$ be continuously differentiable in $t$ 
and twice continuously differentiable in $x$  and define $v$ by
\[
u(t,x)=a(x)^{-1/4}\,v\bigl(t, h(x)\bigr) =a(x)^{-1/4}\,v\bigl(t,y\bigr) \hbox{ with }y=h(x).
\]
Then, for any continuous functions $r,b$, and for all $(t,x)\in \R \times \R$
\begin{multline}\label{eq:change-identity}
\partial_t u - \partial_x\!\big(a(x)\,\partial_x u\big) - r(x)u + b(x)u^2
= a(x)^{-1/4}\!\Big(
\partial_t v - \partial_{yy}v - (r(x)-w(x))v  \\
+ b(x)\,a(x)^{-1/4} v^2
\Big),
\end{multline}
where $u=u(t,x)$, $v=v(t,y)=v(t,h(x))$ and $w(x)$ is defined by~\eqref{def:w}.
\end{lemma}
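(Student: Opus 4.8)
The plan is to compute each term on the left-hand side of~\eqref{eq:change-identity} by the chain rule and collect powers of $v$ and its $y$-derivatives. Write $g(x):=a(x)^{-1/4}$, so that $u(t,x)=g(x)\,v(t,h(x))$, and recall $h'(x)=a(x)^{-1/2}$. For the time derivative, $\partial_t u = g\,\partial_t v$. For the spatial part, differentiating once gives $\partial_x u = g' v + g h' v_y$, where $v_y$ denotes $\partial_y v$ evaluated at $(t,h(x))$, and hence $a\,\partial_x u = a g' v + a^{1/2} g\, v_y$ using $a h' = a^{1/2}$. Differentiating once more and using $a^{1/2} h' = 1$,
\[
\partial_x\!\big(a\,\partial_x u\big) = (a g')'\, v + \big[a g' h' + (a^{1/2} g)'\big] v_y + g\, v_{yy}.
\]

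First I would check that the coefficient of $v_y$ vanishes. Since $h'=a^{-1/2}$ we have $a g' h' + (a^{1/2} g)' = a^{1/2} g' + (a^{1/2} g)' = 2 a^{1/2} g' + \tfrac12 a^{-1/2} a' g$; substituting $g=a^{-1/4}$, $g'=-\tfrac14 a^{-5/4} a'$ turns this into $-\tfrac12 a^{-3/4} a' + \tfrac12 a^{-3/4} a' = 0$. This is precisely the role of the $a^{-1/4}$ prefactor: it removes the drift produced by the change of spatial variable. Consequently the left-hand side of~\eqref{eq:change-identity} equals
\[
g\big(\partial_t v - v_{yy}\big) - \big[(a g')' + r g\big] v + b g^2 v^2,
\]
and since $g^2 = a^{-1/2}$ the quadratic term already matches $g\cdot b\,a^{-1/4} v^2$.

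It then remains to identify the zeroth-order coefficient, i.e. to show $(a g')' = -g\,w$, which after multiplying by $g^{-1}=a^{1/4}$ and invoking the definition~\eqref{def:w} of $w$ is equivalent to $(a (a^{-1/4})')' = -\tfrac13 (a^{3/4})''$. This is a direct computation: expanding the left side gives $(a g')' = -\tfrac14 (a^{-1/4} a')' = \tfrac1{16} a^{-5/4}(a')^2 - \tfrac14 a^{-1/4} a''$, while $(a^{3/4})'' = -\tfrac{3}{16} a^{-5/4}(a')^2 + \tfrac34 a^{-1/4} a''$, so that $-\tfrac13 (a^{3/4})''$ coincides with $(a g')'$. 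Substituting this back and factoring out $a^{-1/4}=g$ produces exactly the right-hand side of~\eqref{eq:change-identity}.

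I do not expect a genuine obstacle here; the proof is a careful but routine computation. The only points demanding attention are keeping the $x$-derivatives of the prefactor separate from the $y$-derivatives of $v$ (evaluated along $y=h(x)$) when applying the chain rule, and the elementary algebra verifying both that the first-order term in $y$ cancels and that $(ag')'=-gw$.
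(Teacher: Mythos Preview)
Your proposal is correct and follows essentially the same direct chain-rule computation as the paper. The only cosmetic difference is that you introduce the abbreviation $g=a^{-1/4}$ and isolate the cancellation of the $v_y$-coefficient as a separate step, whereas the paper carries explicit powers of $a$ throughout and reads off the cancellation in the expanded expression for $\partial_x(a\,\partial_x u)$; both arrive at the identity $w=\tfrac14\big(a''-(a')^2/(4a)\big)$ in the same way.
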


\begin{proof}
Set $u(t,x)=a(x)^{-1/4}v(t, h(x))$. Then $\partial_t u(t,x)=a^{-1/4}(x)\partial_t v(t,h(x))$. For spatial derivatives,
\[
\partial_x u =-\tfrac14 a^{-5/4}a' v+a^{-1/4}\partial_y v\, h '
=-\tfrac14 a^{-5/4}a' v+a^{-1/4}\partial_y v\, a^{-1/2}.
\]
Hence $a\,\partial_x u=-\tfrac14 a' a^{-1/4}v+a^{1/4}\partial_y v$ and
\begin{align*}
\partial_x\!\big(a\,\partial_x u\big)
&=\partial_x\!\Big(-\tfrac14 a' a^{-1/4}v+a^{1/4}\partial_y v\Big) \\
&= -\tfrac14 a'' a^{-1/4}v + \tfrac{1}{16} (a')^2 a^{-5/4} v - \tfrac14 a' a^{-1/4}\partial_y v h'+ \tfrac14 a^{-3/4}a'\partial_y v + a^{1/4} \partial_{yy} v\, h ' \\
& = - a^{-1/4}\tfrac14  \left(  a''   - \frac{(a')^2}{4 \, a}\right) v +  a^{-1/4} \partial_{yy} v \\
& =  a^{-1/4}(\partial_{yy} v -w \, v),
\end{align*}
since $ h '=a^{-1/2}$ and 
$w=\tfrac13\, a^{1/4}\, (a^{3/4})'' =\tfrac14\!\left(a''-\frac{(a')^2}{4a}\right).$
Therefore $\partial_t u-\partial_x(a\,\partial_x u)=a^{-1/4}\big(\partial_t v-\partial_{yy} v+w v\big)$, $r u = r a^{-1/4}v $ and
$b\, u^2=b a^{-1/2}v^2$, yielding \eqref{eq:change-identity}. 
\end{proof}

\begin{proof}[Proof of Theorem~\ref{thm:explicit-front}]
Assume \eqref{eq:M-and-B}. By Lemma~\ref{lem:change}, $u(t,x)$ solves \eqref{eq:KPP} if and only if $v(t,y):= a(x)^{1/4}\, u(t,x)$, with $y= h(x)$ solves the homogeneous Fisher-KPP equation
\begin{equation} \label{eq:homog}
\partial_t v=\partial_{yy} v+ r_0 v- b_0 v^2 \qquad (t,y)\in \R \times\R.
\end{equation}

\noindent (i) The constant stationary state $v\equiv r_0/ b_0$ gives the stationary state
$p(x)=a(x)^{-1/4}\,(r_0/ b_0)$. Uniqueness of the positive stationary state under the KPP assumptions follows from standard results~\cite{BerHamRoq05a}.

\

\noindent (ii) The classical Ablowitz-Zeppetella front \cite{AblZep79}
\[
v(t,y)=\frac{r_0}{ b_0}\Big(1+e^{\sqrt{r_0/6}\,\big(y-c_{\mathrm{AZ}}t-\xi_0\big)}\Big)^{-2},
\qquad c_{\mathrm{AZ}}=5\sqrt{r_0/6},
\]
is an explicit solution of the homogeneous equation~\eqref{eq:homog}; $u(t,x)=a^{-1/4}(x)v(t, h(x))$ gives \eqref{eq:explicit-solution} and the limits $u(t,\cdot)\to p$ as $t\to-\infty$, $u(t,\cdot)\to0$ as $t\to+\infty$.

\

\noindent (iii)
Set $\ds T:=\frac{\Lambda}{c_{\mathrm{AZ}}}$. 
Recall from \eqref{eq:explicit-solution} that
\[
u(t,x)=\frac{ r_0}{ b_0}\,a(x)^{-1/4}
\Big(1+\exp\big(\sqrt{ r_0/6}\,[ h(x)-c_\mathrm{AZ} t-\xi_0]\big)\Big)^{-2}.
\]
Since $a$ is $L-$periodic and $ h(x-L)= h(x)-\Lambda$, we have
\[
u(t,x-L)
=\frac{ r_0}{ b_0}\,a(x)^{-1/4}
\Big(1+\exp\big(\sqrt{ r_0/6}\,[ h(x)-\Lambda-c_\mathrm{AZ} t-\xi_0]\big)\Big)^{-2},
\]
while
\[
u(t+T,x)
=\frac{ r_0}{ b_0}\,a(x)^{-1/4}
\Big(1+\exp\big(\sqrt{ r_0/6}\,[ h(x)-c_\mathrm{AZ} t-c_\mathrm{AZ}T-\xi_0]\big)\Big)^{-2}.
\]
By the choice $T=\Lambda/c_{\mathrm{AZ}}$, 
\begin{equation}\label{eq:pulse-identity-direct}
u(t+T,x)=u(t,x-L)\qquad\text{for all }(t,x)\in\R \times \R.
\end{equation}
Consequently, $u(t,x)$ is a right-moving pulsating traveling front with speed 
$\ds c:=\frac{L}{T}=\frac{c_{\mathrm{AZ}}}{\langle a^{-1/2}\rangle}$.
\end{proof}

\begin{proof}[Proof of Theorem~\ref{thm:klambda}] Let $\psi$ be the principal eigenfunction of the operator $\mcl_\lambda$, i.e., the positive $L-$periodic function such that $\mcl_\lambda \psi = k_\lambda^L[a;r] \psi$, with $\psi(0)=1$ to ensure uniqueness. We note that 
\begin{equation} \label{eq:Lpsi}
    \mcl_\lambda \psi(x) = e^{\lambda \, x} \mcl_0 \left(e^{-\lambda \, x} \psi(x)\right), \ x\in \R.
\end{equation}
For all $y\in \R$, define $x = h^{-1}(y)$ (we recall that $h$ is an increasing diffeomorphism  from $\mathbb{R}$ to $\mathbb{R}$). Define $\varphi(y)$ by the relation:
\begin{equation} \label{def:varphi}
   e^{-\lambda \, x} \psi(x)=a(x)^{-1/4} \, e^{-\mu \, y} \, \varphi(y), \ \ \ \mu:= \frac{\lambda}{\langle a^{-1/2}\rangle}.
\end{equation}
We first observe that $\varphi$ is $\Lambda-$periodic. We have
$$
     e^{-\lambda \, (x+L)} \psi(x+L)=a(x+L)^{-1/4} \, e^{-\mu \, h(x+L)} \, \varphi(h(x+L)).
$$
Since $a$ and $\psi$ are $L-$periodic and since $h(x+L)=h(x) + \Lambda$, we get
$$
     e^{-\lambda \, x} \psi(x)=a(x)^{-1/4} \, e^{\lambda \, L-\mu \, \Lambda} e^{-\mu \, y} \, \varphi(y + \Lambda).
$$
From the definition of $\mu$ and $\Lambda$, $\lambda \, L-\mu \, \Lambda=0$. Thus, 
$$
     e^{-\lambda \, x} \psi(x)=a(x)^{-1/4} \,  e^{-\mu \, y} \, \varphi(y + \Lambda),
$$
and, together with~\eqref{def:varphi}, this shows that $$\varphi(y + \Lambda)= \varphi(y) \hbox{ for all }y\in \R.$$

Next, set $u(x):=e^{-\lambda \, x} \psi(x)$ and $v(y):=e^{-\mu \, y} \, \varphi(y)$. Lemma~\ref{lem:change} with $b\equiv 0$ implies that, for all $x \in \R$, and with $y = h(x)$, 
\begin{equation}
    \mcl_0[a;r]\left( u(x)\right) = a^{-1/4}(x)\!\left( \partial_{yy}v(y) + \Big(r(x)-w(x)\Big)v(y) \right).
\end{equation}
Defining $R(y):= r(h^{-1}(y))$ and $W(y):= w(h^{-1}(y))$, we get:
\begin{equation}
    \mcl_0[a;r]\left( u(x)\right) = a^{-1/4}(x)\mcl_0[1; R-W](v(y)).
\end{equation}
Multiplying this expression by $e^{\lambda \,x}$ and using \eqref{eq:Lpsi}, we get:
\begin{align}
    \mcl_\lambda[a;r]\left( \psi(x)\right)  & = a^{-1/4}(x) e^{\lambda \,x} \mcl_0[1; R-W](e^{-\mu \, y} \, \varphi(y)), \nonumber \\
    & = a^{-1/4}(x) e^{\lambda \,x}  e^{-\mu \, y} \mcl_\mu[1; R-W](\varphi(y)). \label{eq:mcl_mu}
\end{align}
By definition of $k_\lambda^L[a;r]$ and of $\varphi(y)$, we have 
\begin{align*}
\mcl_\lambda[a;r]\left( \psi (x)\right) & = k_\lambda^L[a;r] \psi(x)     \\
& =  k_\lambda^L[a;r]  \,  a(x)^{-1/4} \, e^{\lambda \, x} \, e^{-\mu \, y} \, \varphi(y).
\end{align*}
Substituting into~\eqref{eq:mcl_mu} gives:
$$ \mcl_\mu[1; R-W](\varphi(y)) =  k_\lambda^L[a;r]  \,  \varphi(y),$$and $\varphi$ is positive and $\Lambda$-periodic. Thus, $\varphi$ is the principal eigenfunction of $ \mcl_\mu[1; R-W]$ with $\Lambda$-periodic conditions. Finally, this shows that $k_\lambda^L[a;r] = k_\mu^\Lambda[1;R-W]$.

\end{proof}

\begin{proof}[Proof of Corollary~\ref{thm:cstar}]
Assume \eqref{eq:M-only}: $r(x)= r_0+w(x)$. Setting again $R(y):=r(h^{-1}(y))$ and $W(y):=w(h^{-1}(y))$, we get $R-W=r_0$. Using Theorem~\ref{thm:klambda}, for all $\lambda>0$ and $L>0$, we have $k_\lambda^L[a;r] = k_\mu^\Lambda[1;R-W] = k_\mu^\Lambda[1;r_0]$. Since $ k_\mu^\Lambda[1;r_0]$ is the principal eigenfunction of the operator $\mcl_\mu[1;r_0]$ with $\Lambda-$periodic conditions and since $\mcl_\mu[1;r_0]$ has constant coefficients, the corresponding principal eigenfunction is constant, and $k_\mu^\Lambda[1;r_0]= \mu^2 +r_0$. Thus, with the definition of $\mu=\frac{\lambda}{\langle a^{-1/2}\rangle}$, we get 
$$k_\lambda^L[a;r] = \frac{\lambda^2}{\langle a^{-1/2}\rangle^2} +r_0.$$
By the Freidlin--G\"artner formula,
$$
        c^* = \inf_{\lambda > 0} \frac{k_\lambda^L[a;r]}{\lambda} = \inf_{\lambda > 0} \frac{\lambda}{\langle a^{-1/2}\rangle^2} + \frac{r_0}{\lambda}= \frac{2\sqrt{ r_0}}{\displaystyle \langle a^{-1/2}\rangle}.
$$
\end{proof}

\begin{proof}[Proof of Corollary~\ref{cor:large-L}]
Let $a_L(x):=a(x/L)$ and $r_L\equiv r_0>0$. We set
\[
w_L(x):=\tfrac13\, a_L(x)^{1/4}\, (a_L(x)^{3/4})'' =\tfrac14\!\left(a_L''(x)-\frac{(a_L'(x))^2}{4a_L(x)}\right).
\]
Since $a_L'=\tfrac1L a'(\cdot/L)$ and $a_L''=\tfrac1{L^2} a''(\cdot/L)$, there exists $C>0$ such that
\[
\|w_L\|_{L^\infty(\R)}\ \le\ \frac{C}{L^2}.
\]
Set 
\[
r^{\rm comp}_{L,\pm}(x):=r_0\pm \|w_L\|_\infty+w_L(x).
\]
Then $r^{\rm comp}_{L,-}\le r_0\le r^{\rm comp}_{L,+}$ so the parabolic comparison principle together with Corollary~\ref{thm:cstar} imply
\[
\frac{2\sqrt{r_0-\|w_L\|_\infty}}{\langle a_L^{-1/2}\rangle}
\ \le\ c^*(L)\ \le\
\frac{2\sqrt{r_0+\|w_L\|_\infty}}{\langle a_L^{-1/2}\rangle}.
\]
Noting that 
$$\langle a_L^{-1/2}\rangle = \frac{1}{L}\int_0^L a_L(x)^{-1/2}\,dx =\int_0^1 a(x)^{-1/2}\,dx = \langle a^{-1/2}\rangle,$$
and letting $L\to\infty$ yields
\[
c^*(L)=\frac{2\sqrt{r_0}}{\langle a^{-1/2}\rangle} + \mathcal O \left( \frac{1}{L^2}\right).
\]
\end{proof}

\begin{proof}[Proof of Corollary~\ref{cor:sandwich-interval}]
Define
\[
\underline{ r_0}:=\min_{x\in[0,L]}\Big(r(x)-w(x)\Big),\quad
\overline{ r_0}:=\max_{x\in[0,L]}\Big(r(x)-w(x)\Big).
\]
Then for all $x$,
\[
\underline{r_0}+w(x)\ \le\ r(x)\ \le\ \overline{r_0}+w(x).
\]
The parabolic comparison principle implies that:
\[
c^*\!\left(a,\underline{r_0}+w(x)\right)\ \le\ c^*(a,r)\ \le\
c^*\!\left(a,\overline{r_0}+w(x)\right).
\]
Applying Corollary~\ref{thm:cstar} to each bound gives
\[
\frac{2\sqrt{\underline{ r_0}}}{\langle a^{-1/2}\rangle}
\ \le\ c^*(a,r)\ \le\
\frac{2\sqrt{\overline{ r_0}}}{\langle a^{-1/2}\rangle}.
\]
\end{proof}

\begin{proof}[Proof of Theorem~\ref{cor:bramson-periodic-diffusion}]
With the notations of Lemma~\ref{lem:change}, $u$ solves \eqref{eq:KPP} if and only if $v$ solves
\begin{equation}\label{eq:bramson-v-eq-proof}
\partial_t v=\partial_{yy}v+\tilde f(y,v),
\end{equation}
with
$\tilde f(y,v)
:=\big(r(x)-w(x)\big)v-b(x)a(x)^{-1/4}v^2$, $x=h^{-1}(y)$.
In particular, $\tilde f$ is $\Lambda$-periodic in $y$ and satisfies the KPP and concavity assumptions of~\cite{HamNol16}.

Let $\tilde c^*>0$ the minimal speed  associated with~\eqref{eq:bramson-v-eq-proof}, given by the Freidlin--G\"artner formula:
\[
\tilde c^*=\inf_{\mu>0}\frac{k_\mu^\Lambda[1;R-W]}{\mu},
\]
with the notations of  Theorem~\ref{thm:klambda}, i.e., with 
 $R(y):=r(h^{-1}(y))$, and $W(y):=w(h^{-1}(y))$. Let $\mu^*$ be the corresponding minimizer, 
and let $V_{\tilde c^*}$ be the minimal right-moving
pulsating front for that equation.
By \cite[Theorem~1.2]{HamNol16}, there exist a constant $C\ge0$ and a function
$\tilde\xi:(0,\infty)\to\R$ with $|\tilde\xi(t)|\le C$ for all $t>0$ such that
\begin{equation}\label{eq:HNRR-v-Linfty}
\lim_{t\to\infty}
\Big\|
v(t,y)-
V_{\tilde c^*}\!\left(
t-\frac{3}{2\tilde c^* \mu^*}\log t+\tilde\xi(t),y\right)
\Big\|_{L^\infty(0,\infty)}=0.
\end{equation}

Define
\[
U_{\hat c}(t,x):=a(x)^{-1/4}V_{\tilde c^*}(t,h(x)).
\]
As in the proof of Theorem~\ref{thm:explicit-front}, we observe that $U_{\hat c}$ is a right-moving pulsating traveling
front for \eqref{eq:KPP} with speed 
\[
\hat c=\frac{\tilde c^*}{\langle a^{-1/2}\rangle}.
\]
By Theorem~\ref{thm:klambda},
\[
k_\lambda^L[a;r]=k_\mu^\Lambda[1;R-W]
\quad\text{with}\quad
\mu=\frac{\lambda}{\langle a^{-1/2}\rangle},
\]
so the minimal speed $c^*>0$ for~\eqref{eq:KPP} given by the Freidlin--G\"artner formula \eqref{eq:garfre} satisfies
\[c^*=\inf_{\lambda>0}\frac{k_\lambda^L[a;r]}{\lambda}=\inf_{\mu>0}\frac{k_\mu^\Lambda[1;R-W]}{\langle a^{-1/2}\rangle \mu} =\frac{\tilde c^*}{\langle a^{-1/2}\rangle}= \hat c,\]and
the corresponding minimizers satisfy
\[
\lambda^*=\langle a^{-1/2}\rangle\,\mu^*,
\]
and therefore $c^* \, \lambda^* = \tilde c^* \, \mu^*$.
Finally, evaluating~\eqref{eq:HNRR-v-Linfty} at $y=h(x)$ and multiplying the expression by $a(x)^{-1/4}$, we obtain~\eqref{eq:bramson-diffusion}.

\end{proof}

\section*{Acknowledgements}
This work was supported by the ANR project ReaCh, {ANR-23-CE40-0023-01}.

\bibliographystyle{abbrv}

\end{document}